\documentclass[article,noinfoline]{imsart}
\usepackage{graphicx,enumerate}

\usepackage{jr2,comment}




\newtheorem{theorem}{Theorem}
\newtheorem{lemma}{Lemma}
\newtheorem{Assumption}{Assumption}
\newtheorem{corollary}{Corollary}
\newtheorem{example}{Example}
\startlocaldefs

\def\boldsymbol#1{\setbox\ewb\hbox{$#1$}%
    \setlength{\deno}{-\wd\ewb+0.05em}{ #1}\hspace{\deno}{#1}}
\endlocaldefs

\begin{document}

\baselineskip=22pt

\title{Consistent Empirical Bayes estimation  of the mean of a mixing distribution without identifiability assumption. With applications to
treatment of non-response.}

\begin{center}
{ {\bf \large  Eitan Greenshtein}

Central Bureau of Statistics, Israel

eitan.greenshtein@gmail.com
} 
\end{center}
{\bf Abstract}

Consider a Non-Parametric  Empirical Bayes  (NPEB) setup. We observe $Y_i, \sim f(y|\theta_i)$, $\theta_i \in \Theta$ independent, where $\theta_i \sim G$
are independent $i=1,...,n$. The mixing distribution  $G$ is unknown $G \in \{G\}$   with no parametric assumptions about the class $\{G \}$. The common NPEB task is to estimate 
$\theta_i, \; i=1,...,n$. Conditions that imply 'optimality' of such NPEB estimators typically require identifiability of  $G$ based on
$Y_1,...,Y_n$. 
We consider the task of estimating $E_G \theta$. We show that `often' consistent estimation of $E_G \theta$ is implied without
identifiability.

We motivate the later task, especially in setups with non-response and missing data. We demonstrate consistency in simulations.

\section{Introduction}

Our goal in this paper is to utilize Non Parametric  Empirical Bayes (NPEB) ideas for the purpose of estimation of means of mixtures, in particular under sampling with non-response.

Let $\theta_i \sim G$, $i=1,...,n$ be iid, let  $Y_i \sim f(y|\theta_i), \; \theta_i \in \Theta$;  $f(y|\theta) $  is the density of the distribution of $Y$, with respect to  some dominating  measure $\mu$ under $\theta$. The conventional and most common  goal in
NPEB is point estimation, i.e., the estimation of $\theta_i$ based on the observed $Y_1,...,Y_n$, without knowing $G$.

Given a Generalized  Maximum Likelihood  Estimator  (GMLE) $\hat{G}$  for $G$, a plausible and common estimator for $\theta_i$ is
$E_{\hat{G}} (\theta|Y_i)$.

The above is trivially generalized when the goal is to estimate 
$\eta(\theta_i) \equiv \eta_i$ for some function $\eta$. Then we have
the point estimators 
$$\hat{\eta}_i=E_{\hat{G}} (\eta(\theta)|Y_i)$$

We will consider the less  common practice of estimating $E_G \theta$,
or more generally, $E_G \eta(\theta) \equiv \eta_G$, especially for the case $\eta(\theta)=E_\theta(h(Y))$ for a given $h$.  Thus, $E_G(\eta(\theta))= E_G h(Y)$. See Greenshtein and Ritov  (2022) (GR),
see also Greenshtein and Itskov (2014) (GI). A plausible estimator is
$$\hat{\eta}_G \equiv E_{\hat{G}} \eta(\theta)=\eta_{\hat{G}}.$$
Note the above estimator is not well defined when $\hat{G}$ is not unique, handling this issue is  one of the main  goals of this paper. In (GI) and (GR)  the above estimator was understood as the set of all of its
values that correspond to all possible GMLE  $\hat{G}$. 

The above estimator is useful  when one has a weak convergence of {\it any} sequence of GMLE
$\hat{G}$ to $G$, as $n \rightarrow \infty$.    This is since that for any continuous and bounded $\eta(\theta)$, $\hat{G} \Rightarrow_w G$,
implies  $ \hat{\eta}_G \rightarrow \eta_G$.
On weak convergence of  a GMLE, see Kiefer and Wolfowitz (1956), see also Chen, J.  (2017).
 An important condition that is required
in order to have such a weak convergence is {\it identifiabiliity}.
We were surprized in (GR) and (GI) to see in simulations that
 $\hat{\eta}_G$ estimates $\eta_G$ excelently even when there is no
identifiability, and any possible GMLE $\hat{G}$ was applied.

Understanding the last phenomena is the main motivation for this paper.
In the next subsection, we  will elaborate on the conventional
NPEB problem of  point estimation, versus the NPEB estimation of the mean of a mixture.
The later task is important in official statistics, where the main goal is to estimate population's mean
rather than individual parameters.

First some basic definitions.

\noindent{\bf GMLE for $\boldsymbol{G}$.}
Given a distribution $G$ and a dominated family of distributions with densities {\nolinebreak$\{ f(y\mid \theta):\; \theta \in \Omega \}$} with respect to some dominating measure $\mu$, define
$$ f_G(y)= \int f(y\mid \vartheta) dG(\vartheta).$$

Given observations $Y_1,...,Y_n$,
a {\it GMLE} $\hat{G}$ for $G$ (Kiefer and Wolfowitz, 1956) is defined  as (any) $\hat{G}$ satisfying:
\begin{equation} \label{eq:hatG} \hat{G}= \argmax_{{G}} \; \Pi f_{{G}}(Y_i); \end{equation}
the maximization is with respect to all probability distributions $\{G\}$ on $\Theta$.

Under the above setup we say that we have {\it identifiablity} if for every
$G_1$  and $G_2$, $f_{G_1}=f_{G_2}$ implies $G_1=G_2$.

\subsection {NPEB for point estimation versus NPEB for the estimation of the mean of a mixture}\label{sec:versus}

The natural estimator for $\eta_G=E_G(h(Y))$ in the above  setup
is  $$\frac{1}{n} \sum h(Y_i).$$ 
The last estimator is also efficient, e.g., under an exponential family setup. It can not be significantly improved  in general.  However the problem becomes interesting and challenging in situations with non-response.
NPEB could be useful in the later situations, as will be demonstrated.

The following example demonstrates the relation and the difference between the two  estimation tasks. It is not yet in the context of  non-response.

\begin{example} \label{ex:first}
Let  $\theta_i \sim G$ be iid, $Y_i \sim Bernoulli(\theta_i)$, $\theta_i \in [0,1]$, $i=1,...,n$.  Suppose $Y_1=...=Y_{n/2}=0$ and $Y_{n/2+1}=...= Y_n=1$.

It is easy to see that  both the following $\hat{G}_1$ and  $\hat{G}_2$ are GMLE
for this data set. Let $\hat{G}_1$ satisfy $P_{\hat{G}_1}(\theta=0.5)=1$, while $\hat{G}_2$ satisfy $P_{\hat{G}_2}(\theta=0.75)=P_{\hat{G}_2}(\theta=0.25)=0.5.$

Note, the above two distributions are non-identiafiable.  Also,
$$E_{\hat{G}_1}(\theta_i|Y_i=1)\neq  E_{\hat{G}_2}(\theta_i|Y_i=1).$$
Thus, NPEB is useless  for the task of point estimation, because it is not clear which GMLE should be used.

However, for $h(Y)=Y$, and $\eta(\theta)= E_{\theta} h(Y)$ 
$$ E_{\hat{G}_1}\eta(\theta)=E_{\hat{G}_2}\eta(\theta)=0.5.$$
A more general result, under   exponential family setups  is shown in
(GR) 2022, where it is shown that for any GMLE $\hat{G}$,
$E_{\hat{G}} Y=\frac{1}{n}\sum_i Y_i$. 

Note, in the setup of the last example, in the case $\eta(\theta)=\theta^2$,
$ E_{\hat{G}_1}\eta(\theta) \neq E_{\hat{G}_2}\eta(\theta).$ A key difference between the two cases will be clarified  in the sequel.

So, the estimator $\hat{\eta}_G=E_{\hat{G}} \eta(\theta)$ might be useful  for the estimation of $\eta_G$ inspite of the non-identifiability. This is since  that "sometimes"
  we may apply {\it any} GMLE.
\end{example}
 
\bigskip

It is worth mentioning  that, in general, for any  GMLE $\hat{G}$,
$E_{\hat{G}}\eta(\theta)=\sum_i E_{\hat{G}} (\eta(\theta)| Y_i)$. See (GR) 2022.

\subsection{Empirical Bayes, a brief review.}
The idea of Empirical Bayes was suggested by Robbins, see  Robbins (1953 ), (1956), (1964).
In Parametric Empirical Bayes, the prior $G$ is assumed to belong to a parametric family of distribution and the task
is to estimate $G$, or its corresponding parameters, based on the observed $Y_1,...,Y_n$.
As an example consider the case  $f(y|\theta)=N(\theta,1)$, $\theta \in \Theta \subseteq \R$ and $G=N(0,\sigma^2)$, where $\sigma^2$ is unknown.  In Non-Parametric 
Empirical Bayes  the prior $G$ is  a member of the set of all possible distributions on the parameter set $\Theta$.

In the non-parametric case  there is the $f$-modelling approach under which one attempts to estimate the density 
$f_G(y)$ and $E_G(\theta|Y_i)$ indirectly without estimating $G$. Another approach is the $G$-modelling under which
$G$ is estimated, see Efron (2014) for the two approaches.  See reviews on Empirical Bayes and its applications, e.g., in Efron ( 2010), and in  
Zhang ( 2003).

The estimation of $G$, under non-parametric  $G$-modelling, is typically done through GMLE. Traditionally the estimation was performed using EM-algorithm, see Laird (1978). Koeneker and Mizera ( 2014) suggested modern convex  optimization techniques  for the purpose of computation.

As mentioned the more  common task in Empirical Bayes is the point estimation of the individual parametrs $\theta_i$,  where $Y_i \sim f(y|\theta_i)$, $i=1,...,n$. In this paper, we emphasize the task of estimating the mean of the mixture $G$.

\section{Examples.} \label{sec:ex}

We  introduce some motivating examples explored also in
(GR) and in (GI).

The following example is studied in (GI) (2014).

\begin{example} \label{ex:GI}
Consider a survey, under which each sampled item is approached $K$
times at most. If no response is obtained within $K$ attempts the outcome is " non-response". The  response is denoted $X$, $X=x_1,...,x_S$, e.g., $x_1=\mbox{"employed"}, \; x_2=\mbox{ "unemployed"}$, as in (GI). 
Let $K_i$  be the number of attempts until a response, $Y= (X_i, K_i)$, however we oobserve $Y_i$ that correspond to sampled item $Y_i$,  $K_i \leq K$, otherwise it is censored and we observe "non-response",
or equivalently we observe that the number of attempts until response was greater than $K$. The possible
values of an observed  $Y$ are $(X_i, K_i)$ and "Non-Response" altogether
$K \times S +1$ possible values for an observed $Y$ (including non-response).

Each sampled item $i$ has  a random   response which is determined by 
a parameter $\theta_i$, $i=1,...,n$. In the above example think of the observed number of attempts $K_i$ 
as a truncated Geometric random variable, with parameter $\pi^i$, the probability of $X_i=x_s$ under $\theta_i$ is $p_i^s, \; s=1,2,...,S$, where $\theta_i=(\pi^i, p^1_i,...,p_S^i)$. Assume that conditional on $\theta_i$, $K_i$ is independent of $X_i$.
One may further extend the model to allow $\pi^i=(\pi_1^i,...,\pi^i_K)$,
where $\pi^i_k$ is the probability under $\theta_i$, of item $i$ to respond at the $k'th$ attempt. Extending the model too much may result in overfitting.
It is desired to estimate $E_G \eta(\theta)$ where $\eta(\theta_i)=(p^i_1,...,p^i_S)$. This estimates the population's  proportion of items with $X=x_s$, e.g., proportion of unemployed in the population (assuming  the original sample including the non-rsponse items
is representing).

\end{example}
The following example is studied in (GR) 2022.

\begin{example} \label{ex:GR}
a) It is desired to estimate the proportion in the population of (say) unemployed. A sample  is designed
in "small areas/ strata",  in each of $n$ strata.  In order to have approximate "missing at random" conditional on the strata, strata are chosen"small".
A sample of size $\kappa$ is taken from each of the $n$ strata. Let $\kappa_i$ be the number of responses 
in starata $i$, $i=1,...,n$. The possible outcome are $Y_i=(X_i, \kappa_i)$, $i=1,...,n$, where $X_i$ is the number of unemployed among the $\kappa_i$ responders, while $\kappa_i=0,...,\kappa$ is the number of responders in strata $i$. Assume that the conditional
distribution of $X_i$  conditional  on $\kappa_i$ is $B(\kappa_i, p_i)$, while $\kappa_i \sim B(\kappa,  \pi_i)$ . 
Let $\theta_i=(\pi_i,p_i)$.
It is desired to estimate $E_G \eta(\theta)$  where $\eta(\theta_i)= p_i$, e.g., the population's proportion  $E_G \eta(\theta)$   (assuming strata are of equal size) .

b) A similar setup is suitable under observational studies, where $\kappa_i$,   the number of observed items in strata $i$, is distributed $Poisson(\lambda_i)$. This is, e.g., when it is desired to estimate
the spread of a disease. The estimator is  based on a "convenience sample" from small areas reporting   on the number of people in those areas,
that had an (unrelated) blood test and the results of the test.  
Here the parmeter of interest is $\theta_i=(\lambda_i,p_i)$, $\eta(\theta_i)=p_i$.

Note, in the  current case, where $\kappa_i$ is distributed  Poisson, (GR) proved  that $\hat{G} \Rightarrow_w G$, so good simulation and real data analysis, reporeted in (GR), are less surprising.

\end{example}

\bigskip
 \section{On the uniquness of $\hat{\eta}_G$, in spite of non-identifiability}

\subsection{Uniqness of the distribution of the observed $y_i$, under GMLE with non-identifiability} 

Given observed $Y_1=y_1,..., Y_n=y_n$, let $\hat{G}_1$ and $\hat{G}_2$ be two GMLE. Then:

\begin{lemma} \label{lem:main}
For each   $y_i$, $i=1,...,n$, $$f_{\hat{G}_1}(y_i)=f_{\hat{G}_2}(y_i).$$
\end{lemma}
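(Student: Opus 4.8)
The plan is to exploit the concavity of the log\nobreakdash-likelihood in $G$ together with the \emph{strict} concavity of the logarithm. Write $L(G)=\sum_{i=1}^{n}\log f_G(y_i)$ for the log\nobreakdash-likelihood; by definition a GMLE is a maximizer of $L$ over all probability distributions on $\Theta$. Since $G\mapsto f_G(y_i)=\int f(y_i\mid\vartheta)\,dG(\vartheta)$ is affine in $G$ and $\log$ is concave, $L$ is concave, and the set of probability distributions on $\Theta$ is convex, so any convex combination of two GMLE is again an admissible candidate.

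First I would form the mixture $\bar G=\tfrac12\hat G_1+\tfrac12\hat G_2$, which is again a probability distribution on $\Theta$, and note that by linearity of the integral $f_{\bar G}(y_i)=\tfrac12 f_{\hat G_1}(y_i)+\tfrac12 f_{\hat G_2}(y_i)$ for every $i$. Next I would record that at a GMLE all the numbers $f_{\hat G_j}(y_i)$ are strictly positive: as soon as there is a single $G_0$ with $\prod_i f_{G_0}(y_i)>0$, a maximizer $\hat G_j$ satisfies $\prod_i f_{\hat G_j}(y_i)\ge\prod_i f_{G_0}(y_i)>0$, so each factor is positive; hence $\log f_{\hat G_j}(y_i)$ is finite and the strict concavity of $\log$ applies term by term.

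Then I would apply strict concavity of $\log$:
\[
\log f_{\bar G}(y_i)\ \ge\ \tfrac12\log f_{\hat G_1}(y_i)+\tfrac12\log f_{\hat G_2}(y_i),
\]
with equality if and only if $f_{\hat G_1}(y_i)=f_{\hat G_2}(y_i)$. Summing over $i$ gives $L(\bar G)\ge\tfrac12 L(\hat G_1)+\tfrac12 L(\hat G_2)$. But $\hat G_1$ and $\hat G_2$ are both maximizers, so $L(\hat G_1)=L(\hat G_2)=\max L\ge L(\bar G)$; therefore the displayed inequality must be an equality for every $i$, and the equality case of strict concavity forces $f_{\hat G_1}(y_i)=f_{\hat G_2}(y_i)$ for each $i=1,\dots,n$, which is the claim.

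I expect the argument to be essentially routine; the one point that deserves a line of care is the finiteness of $L$ at a GMLE (equivalently, $f_{\hat G}(y_i)>0$ for all $i$), since the equality\nobreakdash-case reasoning collapses at $-\infty$. This is immediate whenever each observed value $y_i$ is attainable under some parameter, i.e. $f(y_i\mid\vartheta)>0$ for some $\vartheta$, and so a remark to that effect belongs in the proof. No other obstacle is anticipated: the result is exactly the standard statement that the fitted likelihood values are an invariant of the nonparametric MLE, even when the maximizer itself is not unique.
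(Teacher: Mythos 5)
Your proof is correct, and it rests on the same underlying idea as the paper's: a convex combination of two GMLEs is again a candidate (indeed a maximizer, by concavity of the log-likelihood), and the strict concavity of $\log$ then forces the fitted density values to agree observation by observation. The execution differs. The paper moves along the whole segment $G_\lambda=\lambda\hat G_1+(1-\lambda)\hat G_2$, differentiates $\log L(G_\lambda)$ in $\lambda$, and argues that the derivative cannot vanish on an interval unless the ratios $c_i=f_{\hat G_1}(Y_i)/f_{\hat G_2}(Y_i)$ are all equal to one (splitting the sum into terms with $c_i\le 1$ and $c_i>1$). You instead evaluate at the single point $\lambda=1/2$ and invoke the equality case of Jensen's inequality for $\log$, which is a one-step, derivative-free version of the same mechanism. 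Your route is arguably tighter as written: it sidesteps the differentiation under the integral sign, and it makes explicit the positivity of $f_{\hat G}(y_i)$ at a maximizer, a point the paper leaves implicit but which is genuinely needed for the equality-case reasoning. Nothing is missing from your argument.
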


Before proving the above we need the following preperations.

Recall,  $\hat{G}=\argmax_{G} \Pi_i  f_G(Y_i) \equiv \argmax_{G} \log( \Pi_i  f_G(Y_i) ) 
\equiv \argmax_G \log(L(G)).$

The functional $\log(L(G))$ is concave, but not necessarily strictly concave. Thus, given two maximizers, i.e., two GMLE $\hat{G}_1, \hat{G}_2$, for any 
$\lambda \in (0,1)$ also $\lambda\hat{G}_1 +(1-\lambda)\hat{G}_2 \equiv  G_\lambda$ is a GMLE.

Denote by $ g_i= \frac{ d\hat{G}_i}{d\nu}$ the density of $\hat{G}_i$, with respect to some dominating measure $\nu$,  $ i=1,2$. 
 We obtain that: for any $\lambda \in (0,1)$
 
\begin{eqnarray}
\frac{d}{d\lambda} \log(  L(G_\lambda)) &=& \frac{d}{d\lambda}\sum_i   \log [\int (  f(y_i|\vartheta) [\lambda g_1(\vartheta)+(1-\lambda) g_2(\vartheta) d\nu(\vartheta) ]\\
&=& \sum_i 
\frac{  f_{\hat{G}_1}(Y_i) +  f_{\hat{G}_2}(Y_i)  } 
{ \lambda f_{\hat{G}_1}(Y_i) + (1-\lambda)  f_{\hat{G}_2}(Y_i)  }=0.  
\end{eqnarray}

Denote $c_i=\frac{ f_{\hat{G}_1}(Y_i)  }{ f_{\hat{G}_1}(Y_i) }  $.

Suppose that the above derivative is zero at $\lambda_0 \in (0,1)$. It may be verified that, unless $c_i \equiv 1$, it can {\it not} be zero
at $\tilde{\lambda}=\lambda_0+ \epsilon$, for any $\epsilon >0$ small enough so  that $\tilde{\lambda} \in (0,1)$. This may be seen by splitting the above sum into two; one with $c_i \leq 1$ and another with $c_i>1$.

Now, the proof of the above lemma is immediate.
 \begin{proof}
The proof follows, since $c_i \equiv 1$ implies  $f_{\hat{G}_1}(Y_i)  = f_{\hat{G}_2} (Y_i), \; i=1,...,n.$
\end{proof}

\subsection{Uniqueness of $\eta_{\hat{G}}$  uder non-identifiability.}\label{sec:unique}

\begin{Assumption}\label{as: cond}
There exists a function $h$ and a set $A$ such that for every $\theta \in \Theta$
i)$$\eta(\theta)= E_\theta( (h(Y)|A).$$

ii) $\inf_\theta P_\theta(Y \in A)>0.$
\end{Assumption}

Part i) in the above assumpion is in the spirit of Missing At Random Conditional on $\theta$, see 
little and Rubin (2002). Unlike 
the setup in MAR, we will be able to make inference even if for many $\theta_i$ we have no observation $Y_i$, satisfying $Y_i \in A$.

In both of our motivarting examples, Examples \ref{ex:GI}, \ref{ex:GR}, the above assumption is satisfied.
In the first case take $A$ to be the of all $y=(x,k)$ such  that $k \leq K$, in the second case  the set of all 
$y=(x,\kappa^*)$ such that $\kappa^*>0$.  For $y \in A$ the corresponding $h$ are $h(y)=(I(x=x_1),...,I(x=x_S))$  in the first example and $h(y)=\frac{x}{\kappa^*}$, in the second.   In both cases, we need to assume 
that  there exists $\delta>0$ so that $P_\theta (Y \in A)>\delta$  for every 
$\theta \in \Theta$, and thus,  part ii) of the last assumpton is
satisfied.

 We  first treat the case where $A=\Omega$ is the entire sampled set.

By  definition and the last  assumption we obtain:

\begin{eqnarray}
\eta_G &=&\int \eta(\vartheta) dG(\vartheta) =\int \int h(y) f(y|\vartheta) d\mu dG(\vartheta)  
\label {eqn:new}\\
&=& \int \int h(y) f(y|\vartheta)  dG(\vartheta) d\mu\\
&=& \int  h(y) f_G (y) d\mu \equiv E_G h(Y). \label{eq:bottom}
\end{eqnarray}

The  above is true also when replacing $G$ by a GMLE $\hat{G}$. The main point is that the bottom line in
equation (\ref{eq:bottom}), depends on $G$ only through $f_G$, similarly for $\hat{G}$ and $f_{\hat{G}}$.
Also $f_{\hat{G}}(y)$ is fixed  for any GMLE $\hat{G}$,  when $y=Y_i$ for some observation $Y_i$. 

In the above we applied Fubini and exchanging order of integration. We add the appropriate assumption.

\begin{Assumption} \label{as: fubini} 
We assume throughout,  the appropriate conditions that imply  Fubini's theorm and allowing
exchange in the order of integration.
\end{Assumption}

 We now,   consider the above for the case $A \subset \Omega$.

Consider an empirical Bayes setup with truncated  observations ( i.e, we do not know about obseraions $Y$, $Y \notin A$)  and corresponding densities
$$ \{ f^A (y|\theta) \equiv  \frac{f(y|\theta)}{P_\theta (A) }\times I(Y\in A), \; \theta \in \Theta \}.$$

As before, let $$f_{\hat{G}}^A(y)= \int f^A(y|\vartheta) d\hat{G}(\vartheta).$$

Then by the Assumption \ref{as: cond}:
\begin{eqnarray}
\eta_{\hat{G}} =E_{\hat{G}} \eta(\theta)&= &\int \int  h(y)f^A(y|\vartheta) d\mu d\hat{G}(\vartheta) \\
&=&\int h(y) f_{\hat{G}}^A (y) d\mu
\end{eqnarray}
 
Now, in the above, by Lemma \ref{lem:main},  for $y$ satisfying $y=Y_i$ for some $Y_i$,
 $ f_{\hat{G}^1}(y)=f_{\hat{G}^2}(y)$,
when $\hat{G}^i \; i=1,2$ are GMLE based on the truncated sample. In order to have it for GMLE
that is based  in addition on  the censored information in the truncated  observations, i.e., for $y \notin A$
we utilize the information $Y_i \in A^c$.
The corresponding  dendity is denoted $f^{AC}(y)$ is
$$ f^{AC}(y|\theta)= f^A(y) \;\mbox{if} \; y \in A   \; ; \; f^{AC}(y| \theta)= {P_\theta(A^c)} \; \mbox{if} \; y \in A^c.$$

The right density depends in the context, and on order to simplify notations we will just write $f(y|\theta)$
for all cases $f, f^A, f^{AC}$. The exact version matters for the computation of $\hat{G}$, our estimator
is $\eta_{\hat{G}}=E_{\hat{G}} \eta(\theta)$ for the  appropriate  GMLE $\hat{G}$.

\subsection{Discrete finite support case.}

Consider finite support setups, as  in Section \ref{sec:ex}.  Given $n$, $n=1,2,...$, large enough, every possible outcome is realized.
Hence, by Lemma \ref{lem:main} for every $y$ in the support and for any
pair of sequences of GMLE, $\hat{G}^1 , \hat{G}^2$, \newline
for large enough $n$ :

 \begin{equation} f_{\hat{G}^1}(y)=f_{\hat{G}^2}(y). \label{eq:eqqq} \end{equation} 

 Finally we obtain:
\begin{theorem} \label{thm:main1}
 Under  Assumption \ref{as: cond}, for any  two of sequences GMLE $\hat{G}^1$ and $\hat{G}^2$
$$\lim_{n \rightarrow \infty } [E_{\hat{G}^1} \eta(\theta) -  E_{\hat{G}^2}\eta(\theta)]=0.$$
\end{theorem}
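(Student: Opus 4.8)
The statement is essentially a corollary of Lemma \ref{lem:main} once we make the finite-support structure explicit, so the plan is to assemble the pieces already in place. First I would fix the discrete finite support setup of Section \ref{sec:ex}: the observed $Y$ (under whichever of the densities $f$, $f^A$, $f^{AC}$ is appropriate) takes values in a finite set $\mathcal{Y}$. Since the $Y_i$ are i.i.d.\ and every value $y \in \mathcal{Y}$ in the support has $f_G(y)>0$ bounded away from $0$ uniformly over the relevant class (this is where Assumption \ref{as: cond}(ii) enters, guaranteeing the truncated/censored densities are well defined and the support does not collapse), a Borel--Cantelli argument shows that with probability tending to $1$, and in fact eventually almost surely, every $y \in \mathcal{Y}$ is realized among $Y_1,\dots,Y_n$. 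On that event, Lemma \ref{lem:main} applies to \emph{every} point of the support, giving \eqref{eq:eqqq}: $f_{\hat{G}^1}(y)=f_{\hat{G}^2}(y)$ for all $y\in\mathcal{Y}$ and any two GMLE.

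Next I would feed this into the integral identity derived in Section \ref{sec:unique}. Under Assumption \ref{as: cond}, combined with Fubini (Assumption \ref{as: fubini}), we have for any GMLE $\hat{G}$
\[
E_{\hat{G}}\eta(\theta) \;=\; \int h(y)\, f^A_{\hat{G}}(y)\, d\mu(y) \;=\; \sum_{y \in A\cap\mathcal{Y}} h(y)\, f^A_{\hat{G}}(y),
\]
the last equality because $\mu$ is supported on the finite set $\mathcal{Y}$. Now $f^A_{\hat{G}}(y)$ is obtained from $f_{\hat{G}}(y)$ by the fixed reweighting $f^A(y\mid\theta)=f(y\mid\theta)/P_\theta(A)\cdot I(y\in A)$ integrated against $\hat{G}$, and since \eqref{eq:eqqq} holds at every support point for the density actually used to compute the GMLE (the $f^{AC}$ version, which determines $f^A_{\hat{G}}$ on $A$ as well), we get $f^A_{\hat{G}^1}(y)=f^A_{\hat{G}^2}(y)$ for every $y\in A\cap\mathcal{Y}$ on the event above. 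Hence
\[
E_{\hat{G}^1}\eta(\theta) - E_{\hat{G}^2}\eta(\theta) \;=\; \sum_{y\in A\cap\mathcal{Y}} h(y)\big(f^A_{\hat{G}^1}(y) - f^A_{\hat{G}^2}(y)\big) \;=\; 0
\]
on that event, and since the event has probability tending to $1$ (indeed occurs for all large $n$ a.s.), the difference tends to $0$, which is the claim.

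The one genuine subtlety — and the step I would expect to require the most care — is the logical relationship between the GMLE computed from the \emph{censored} sample (the $f^{AC}$ model, which is what is actually available and what Lemma \ref{lem:main} should be applied to) and the quantity $\eta_{\hat G}=E_{\hat G}\eta(\theta)$, which is an integral against the \emph{truncated} density $f^A$. One must check that the value $f^A_{\hat G}(y)$ for $y\in A$ is determined by $f^{AC}_{\hat G}$ restricted to $A$ — which it is, since $f^{AC}(y\mid\theta)=f^A(y\mid\theta)$ for $y\in A$ — so that the equality of $f^{AC}_{\hat G^1}$ and $f^{AC}_{\hat G^2}$ at every support point (including those in $A^c$) forces agreement of the two estimators. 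A second minor point is that ``every outcome is realized'' is a statement about the random sample, so the convergence in the theorem is almost-sure (or in probability) rather than deterministic; the statement $\lim_n[\cdots]=0$ should be read in that sense, and I would note this explicitly. Everything else is bookkeeping: concavity of $\log L(G)$ giving that convex combinations of GMLE are GMLE, the differentiation-in-$\lambda$ computation, and the sign argument splitting the sum over $\{c_i\le 1\}$ and $\{c_i>1\}$ are all already carried out in the lead-up to Lemma \ref{lem:main}.
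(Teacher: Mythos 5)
Your proposal follows essentially the same route as the paper: eventual realization of every support point, Lemma \ref{lem:main} applied at each such point to get \eqref{eq:eqqq}, and the representation $E_{\hat{G}}\eta(\theta)=\int h(y)f_{\hat{G}}(y)\,d\mu$ to conclude. You are in fact more careful than the paper on the points it leaves implicit (the Borel--Cantelli justification, the $f^{A}$ versus $f^{AC}$ bookkeeping, and the almost-sure reading of the limit), but the argument is the same.
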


In the above the index $n$ in each sequence is supressed in the notations.

\begin{proof}  The proof follows from  Lemma \ref{lem:main} , and equation \ref{eq:eqqq} 
since 
$$ E_{\hat{G}^i}\eta(\theta)= \int h(y) f_{\hat{G}^i}(y) d\mu.$$

\end{proof}
\bigskip
Note that in the setup of Example \ref{ex:first}  for the case $\eta(\theta)=\theta^2$, the equality  stated in the above theorm does not hold; indeed Assumption \ref{as: cond} does not hold.

\subsection{Infinite support of $Y$.}

In the infinite, and general case of support of $Y$, the idea is similar. 
Consider Reimann/Lebesgue partition of the support into subsets $\Delta_1,...\Delta_M$, $M=M(n) \rightarrow_{n \rightarrow \infty} \infty$. We may choose, e.g., $\Delta_m,\; m=1,...,M(n)$ of equal measure under $\mu$.
For suitable $M(n)= \o(n)$, as $n \rightarrow \infty$, in any $\Delta_m, \; 1,...,M$ there is a realized observation
denoted $\tilde{Y_i}$.
In a similar manner to the case of finite support,  and by the standard theory and conditions of Reimann/Lebesgue  sums and integrability,  
we get:
\begin{theorem} \label {thm:main2}
Under the assumptions of Theorem \ref{thm:main1} for {\it any} $Y$-support size and any two sequnce of GMLE, $\hat{G}^1$ and $\hat{G}^2$:
$$\lim_{n \rightarrow \infty}[ E_{\hat{G}^1} \eta(\theta) -   E_{\hat{G}^2} \eta(\theta) ]=0.$$
\end{theorem}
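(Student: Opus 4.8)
\textbf{Proof plan for Theorem \ref{thm:main2}.}

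The plan is to reduce the infinite/general support case to the already-established discrete finite-support argument (Theorem \ref{thm:main1}, via Lemma \ref{lem:main}) by a discretization of the sample space, and then control the two approximation errors that this introduces: the error from replacing the integral $\int h(y) f_{\hat G}(y)\,d\mu$ by a finite Riemann/Lebesgue sum, and the error from replacing the exact density values $f_{\hat G}(y)$ at partition points by their values at realized observations $\tilde Y_i \in \Delta_m$. First I would fix the partition $\Delta_1,\dots,\Delta_{M(n)}$ of the support of $Y$ into sets of equal $\mu$-measure (assumed finite after truncation to $A$, so that $f^A_{\hat G}$ is a genuine density with respect to a finite measure), with $M(n)\to\infty$ but $M(n)=\o(n)$. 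A routine occupancy/coupon-collector estimate shows that, with probability tending to $1$, every cell $\Delta_m$ contains at least one realized observation $\tilde Y_i$; this is where the growth condition $M(n)=\o(n)$ is used, and I would note that for $M(n)$ growing slowly enough (e.g.\ $M(n)=\lfloor\log n\rfloor$ or any polynomial slower than $n$) the probability of an empty cell is negligible. On that event, Lemma \ref{lem:main} gives $f_{\hat G^1}(\tilde Y_i) = f_{\hat G^2}(\tilde Y_i)$ for every cell representative.

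Next I would write, for each $j=1,2$,
\begin{equation*}
E_{\hat G^j}\eta(\theta) = \int h(y) f_{\hat G^j}(y)\,d\mu
= \sum_{m=1}^{M(n)} \int_{\Delta_m} h(y) f_{\hat G^j}(y)\,d\mu,
\end{equation*}
and approximate each cell integral by $h(\tilde Y^{(m)}) f_{\hat G^j}(\tilde Y^{(m)})\,\mu(\Delta_m)$, where $\tilde Y^{(m)}$ is a realized observation in $\Delta_m$. Subtracting the two expressions for $j=1$ and $j=2$, the ``sum'' parts cancel exactly by Lemma \ref{lem:main}, leaving only the two Riemann/Lebesgue remainder terms, one for each GMLE. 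So the whole proof comes down to showing that
\begin{equation*}
\sum_{m=1}^{M(n)} \Bigl| \int_{\Delta_m} h(y) f_{\hat G^j}(y)\,d\mu - h(\tilde Y^{(m)}) f_{\hat G^j}(\tilde Y^{(m)})\,\mu(\Delta_m) \Bigr| \longrightarrow 0
\end{equation*}
as $n\to\infty$, for $j=1,2$. Under Assumption \ref{as: fubini} together with integrability of $h\cdot f_{\hat G}$ and mild regularity (e.g.\ $h$ bounded, or $h$ integrable against $f_{\hat G}$ with the partition chosen adapted to $h$), this is exactly the statement that Riemann/Lebesgue sums converge to the integral; choosing cells of equal $\mu$-measure and invoking uniform integrability (or dominated convergence with a fixed dominating function, if the family $\{f_{\hat G^j}\}$ can be bounded uniformly) makes the remainder vanish.

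The main obstacle is the last point: the approximating density $f_{\hat G^j}$ depends on $n$ (it is built from a GMLE on $n$ data points), so one is not approximating a single fixed integrable function but a moving sequence of them, and one must ensure the Riemann/Lebesgue convergence is uniform over this sequence. The cleanest way to handle this is to impose that $h$ is bounded and that $\sup_\theta f(y\mid\theta)$ is locally integrable (or that the family $\{f(\cdot\mid\theta):\theta\in\Theta\}$ is uniformly integrable), so that $f_{\hat G^j}$ inherits a uniform modulus of integrability independent of $n$; then splitting the support into a large compact ``core'' (where equi-continuity or uniform integrability controls the oscillation within each shrinking cell) and a tail of small mass (controlled uniformly in $n$) closes the argument. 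I would state these as the ``standard theory and conditions of Riemann/Lebesgue sums and integrability'' already alluded to in the excerpt, and present the occupancy bound plus the two-term cancellation as the substantive content, leaving the analytic regularity as hypotheses folded into Assumption \ref{as: fubini}.
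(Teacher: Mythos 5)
Your proposal follows essentially the same route as the paper: partition the support into $M(n)=\o(n)$ cells of equal $\mu$-measure so that each cell contains a realized observation, invoke Lemma \ref{lem:main} to make the discretized sums for the two GMLEs cancel, and control the Riemann/Lebesgue remainders. In fact the paper's own proof is only a two-line sketch appealing to ``standard theory of Riemann/Lebesgue sums,'' so your treatment of the occupancy bound, the tail-versus-core splitting, and the need for uniformity over the $n$-dependent densities $f_{\hat G^j}$ is more explicit than what the paper provides.
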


\section{ Weak convergence of $\hat{\eta}_G$ to $\eta_G$ under non-identifiability.}\label{sec:weak}

In Kiefer and Wolfowitz (1956) and in Chen (2017), conditions are given under which 
$\hat{G} \equiv \hat{G}^n \Rightarrow_{w} G$, where $G$ is the true mixture. Those conditions involve identifiabity assumption.
As mentioned, such a weak convergence would imply $\hat{\eta}_G \rightarrow \eta_G$.

Part i) of the following assumption is (implicitly) analayzed in the above mentioned studies, and  does not require identifiability.  We take it as an assumption and do not provide a proof and conditions for it, especially since it is studied in those papers. Elaborating on that assumption would be a digression from our main point.


 Let $\Gamma^n= \{ \hat{G}^n| \hat{G}^n \mbox{is a GMLE} \} \label{eqn: gamma}$.

\begin{Assumption}\label{as: 2}
i)Assume that for every $n$, there exists $\hat{G}^n_0 \in \Gamma^n$, such that $\hat{G}^n_0 \Rightarrow_w G$,
where $G$ is the true mixing distribution.
\end{Assumption}  

ii) The function $\eta(\theta)$ is continuous and bounded.
\begin{theorem}\label{thm:main3}
Under the assumptions of Theorem \ref{thm:main1} and Assumption \ref{as: 2} it follows that for every
sequence $\hat{G}^n$, $n=1,2,...$, $\hat{G}^n \in \Gamma^n$
$$\hat{\eta}_{G^n}  \rightarrow \eta_G.$$
\end{theorem}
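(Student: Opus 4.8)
\textbf{Proof proposal for Theorem \ref{thm:main3}.}

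The plan is to combine two facts already established: (a) by Theorem \ref{thm:main1} (finite support) or Theorem \ref{thm:main2} (general support), any two sequences of GMLE produce the \emph{same} limiting value of $E_{\hat G^n}\eta(\theta)$, in the sense that the difference tends to $0$; and (b) by Assumption \ref{as: 2}(i) there exists a \emph{particular} sequence $\hat G^n_0 \in \Gamma^n$ with $\hat G^n_0 \Rightarrow_w G$. First I would invoke (b) together with part (ii) of the displayed hypothesis — $\eta$ continuous and bounded — so that weak convergence gives, by the portmanteau/definition of weak convergence applied to the bounded continuous test function $\eta$,
\[
E_{\hat G^n_0}\eta(\theta) \;=\; \int \eta(\vartheta)\, d\hat G^n_0(\vartheta) \;\longrightarrow\; \int \eta(\vartheta)\, dG(\vartheta) \;=\; \eta_G .
\]
Thus the claimed limit holds at least along the distinguished sequence $\hat G^n_0$.

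Next I would take an \emph{arbitrary} sequence $\hat G^n \in \Gamma^n$ and write the elementary decomposition
\[
\hat\eta_{G^n} - \eta_G \;=\; \bigl(E_{\hat G^n}\eta(\theta) - E_{\hat G^n_0}\eta(\theta)\bigr) \;+\; \bigl(E_{\hat G^n_0}\eta(\theta) - \eta_G\bigr).
\]
The second bracket tends to $0$ by the previous paragraph. The first bracket is exactly the quantity controlled by Theorem \ref{thm:main1} (respectively Theorem \ref{thm:main2}): applying that theorem to the pair of GMLE sequences $\hat G^n$ and $\hat G^n_0$ shows it tends to $0$ as $n\to\infty$. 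Adding the two pieces and using that the sum of two null sequences is null gives $\hat\eta_{G^n}\to\eta_G$, which is the assertion. Since $\hat G^n$ was arbitrary in $\Gamma^n$, the conclusion holds for every such sequence.

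The main thing to be careful about — and the step I expect to be the real obstacle — is matching the \emph{hypotheses} of Theorem \ref{thm:main1}/\ref{thm:main2} with what is available here: those theorems are stated under Assumption \ref{as: cond}, which is indeed part of the stated assumptions of Theorem \ref{thm:main3}, so the uniqueness-of-limit conclusion does apply; but one must make sure the relevant ``support'' regime (finite discrete vs.\ infinite, and the Riemann/Lebesgue partition argument with $M(n)=\o(n)$ in Theorem \ref{thm:main2}) covers the case at hand, and that the densities used ($f$, $f^A$, or $f^{AC}$) are the ones with respect to which the GMLE in $\Gamma^n$ is defined. Once that bookkeeping is in place, the only analytic input is the triviality that weak convergence preserves integrals of bounded continuous functions, so nothing deeper is needed. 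A minor point worth a sentence: if $A\subsetneq\Omega$, one should note that $\eta_{\hat G}=\int h(y)\,f^A_{\hat G}(y)\,d\mu$ depends on $\hat G$ only through $f^A_{\hat G}$, and that Lemma \ref{lem:main} pins this down at observed values, so the reduction to Theorems \ref{thm:main1}--\ref{thm:main2} is legitimate in that case as well.
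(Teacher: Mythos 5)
Your argument is exactly the paper's: weak convergence of the distinguished sequence $\hat G^n_0$ plus boundedness and continuity of $\eta$ gives $\eta_{\hat G^n_0}\to\eta_G$, and Theorem \ref{thm:main2} closes the gap between $\hat G^n$ and $\hat G^n_0$ via the same two-term decomposition. The proposal is correct and matches the paper's proof.
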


\begin{proof}
 Since $\eta(\theta)$ is assumend bounded and continuous, weak convergence imply $\eta_{\hat{G}^n_0} \rightarrow \eta_G$.
The proof follows by Theorem \ref{thm:main2} , since for any sequence $\hat{G}^n \in \Gamma^n$,  $\eta_{\hat{G}^n_0 } - \eta_{\hat{G}^n} \rightarrow 0$ 
\end{proof}

\section{numerical example}

The example in the following is taken directly from (GR) 2022. Including the remark about our surprize
(at the time)  regarding
the good performance of the estimators $\hat{\eta}_G$.

We study the  Model  described in part a of Example \ref{ex:GR}. The attempted sample size is $\kappa$,
while  $\kappa_i$, the realized sample size from stratum $i$. The variable $\kappa_i$ is
distributed $B(\kappa,\pi_i)$, $\kappa=4$. Our simulated populations  have two types of strata,
500 of each type. In the simulations reported in Table \ref{tab:Binom1},  for all of the 1000 strata, $\eta_G=0.5$ throughout the three simulations summarized in the table.  In 500 strata  $\pi_i=p_i=0.5-\del$ while in the other 500 strata, $\pi_i=p_i=0.5+\del$.

In the tables we  present the mean  and the standard deviation of the  naive estimator  which is based on 
estimation of $p_i$ by $\frac{X_i}{\kappa_i}$ when $\kappa_i>0$, and ignore the cases where 
$\kappa_i=0$, in a missing at random manner. The estimator referred to as GMLE is 
$\hat{\eta}_G$

\begin{table}
\caption{The mean and standard deviation of two estimators. Binomial Simulation. $\kappa = 4$ and $\pi_i=p_i=0.5\pm \del$. }\label{tab2}
\label{tab:Binom1}
\begin{center}
\begin{tabular}{ |c||c|c| }
 \hline
    $\del$ & Naive & GMLE \\
	\hline \hline	
 $0.3$ & {\bf 0.559}, (0.012) & {\bf 0.502}, (0.014) \\
 $0.2$ & {\bf 0.522}, (0.011) & {\bf 0.504}, (0.012) \\
 $0.1$ & {\bf 0.504}, (0.010) & {\bf 0.501}, (0.010) \\
 \hline
\end{tabular}
\end{center}
\end{table}

The final reported simulations are summarized in Table \ref{tab:Binomial2}.
We study Binomial sampling with various values of $\kappa$, fixed at $\kappa=1,\dots,5$. Again, there are
two types of strata, 500 strata for each of
the two types; for 500 strata $p_i$ and $\pi_i$ are sampled independently from  $ U(0.1,0.6)$, while for the rest of the strata they are \iid from $U(0.4,0.9)$.

\begin{table}
\caption{Binomial Simulations with continuous $G$. $\kappa$=1,2,3,4,5.}\label{tab3}
\label{tab:Binomial2}
\begin{center}
\begin{tabular}{ |c||c|c| }
 \hline
    $\kappa$ & Naive & GMLE \\
	\hline \hline	
 $1$ & {\bf 0.544}, (0.019) & {\bf 0.530}, (0.015) \\
 $2$ & {\bf 0.528}, (0.014) & {\bf 0.502}, (0.021) \\
 $3$ & {\bf 0.522}, (0.014) & {\bf 0.498}, (0.022) \\
 $4$ & {\bf 0.517}, (0.012) & {\bf 0.499}, (0.020) \\
 $5$ & {\bf 0.512}, (0.009) & {\bf 0.501}, (0.013) \\
 \hline
\end{tabular}
\end{center}
\end{table}

{\it It is surprising how well the GMLE is doing
already for $\kappa=2,3$, in spite of the
non-identifiability of $G$ and the
inconsistency of the (non-unique) GMLE $\hat{G}$ as an estimator of $G$.

In (GR) the  computation of the GMLE is described. We simply took a dense grid of parameters, and applied EM algorithm. Indeed the algorithm may converge to different GMLE, because of the 
non-identifiability. We simply took in each simulation run the  distribution that the algorithm converged to
in the particular run.

Further simulations  and real data examples may be found in the fore mentioned papers (GI) and (GR).

\subsection{ Weighted average.}

 Given a sample $Y_1,...,Y_n$ with corresponding $\theta_1,...,\theta_n$, a natural estimator for $\eta_G$  would be
$\frac{1}{n}\sum \eta( \theta_i)$, i.e., estimate the expectation under $G$ by the realized sample average. However, $\theta_i$ are not observed.

There could be an interest in estimating
a weighted average. For example when the strata are not of equal size and the population average is not a simple average of the strata's averages.  Similarly when the sampling, of strata or of individuals, is not with
equal probabilities.
Suppose the relative weights are $\gamma_1, ... \gamma_n$, and it desired to estimate $\sum \gamma_i \eta(\theta_i)$, e.g., population's average rather than average of strata's averages.
This section considers  a censored setup, it  just suggests a plausible approach, without claiming
 consistency or any optimality property. 
We only sketch a `plausible' approach.
The sketch will be done in light of the previous sections and their corresponding assumptions.

Given $Y_1,...,Y_n$, assume w.l.o.g that $Y_i, i=1,...,m, \in A$ and $Y_i, \; i=m+1,...,n, \in A^c$.

Now,
\begin{eqnarray} 
\eta_G   &=&  E_{{G}} \eta(\theta) \\
&\approx& \frac{1}{n} [ \; \sum_{i=1}^n \eta(\theta_i)] \\
&\approx& \frac{1}{n}[\sum_{i=1}^m h(Y_i) + \sum_{i=m+1}^n \eta(\theta_i) \;]   .              
\end{eqnarray}

We utilize the law of large numbers.

Denote  $$E=\sum_{i=m+1}^n \frac{1}{n} \eta(\theta_i).     $$
Since $\eta_G \approx \eta_{\hat{G}}, $   we obtain    $\hat{E} \equiv   \eta_{\hat{G}} - \frac{1}{n}\sum_{i=1}^{m} h(Y_i)  \approx E  $
we suggest to estimate $M=\sum_{i=1}^n \gamma_i \eta(\theta_i)$  by 
$$\hat{M}= \sum_{i=1}^{m} \gamma_i h(Y_i) \; + \;  \hat{E  }\times  (\sum _{i=m+1}^n \gamma_i).$$

\newpage

{\bf \Large References:}

\begin{list}{}{\setlength{\itemindent}{-1em}\setlength{\itemsep}{0.5em}}

\item
Chen, J. (2017).  ``Consistency of the MLE under Mixture Models.'' Statist. Sci. 32 (1) 47 - 63.
\item
Efron, B. (2012). Large scale inference: Empirical Bayes  methods for estimation , testing and prediction. Cambridge University press.
\item
Efron, B. (2014). Two modeling strategies for empirical Bayes estimation. Statistical science: a
review journal of the Institute of Mathematical Statistics, 29(2):285.
\item
Greenshtein, E. and Itskov, T (2018), Application of Non-Parametric  Empirical
Bayes to Treatment of Non-Response. {\it Statistica Sinica} 28 (2018), 2189-2208.
\item
Greenshtein, E. and Ritov, Y. (2022). Generalized maximum likelihood estimation of the mean of parameters of mixtures. with applications to sampling and to observational studies. {\it EJS} 16(2): 5934-5954.
\item
Kiefer, J. and Wolfowitz, J. (1956). Consistency of the maximum likelihood estimator in the presence of infinitely many incidental parameters. {\it Ann.Math.Stat.}
27 No. 4, 887-906.
\item
Koenker, R. and Mizera, I. (2014). Convex optimization, shape constraints,
compound decisions and empirical Bayes rules. {\it JASA } 109, 674-685.
\item
Laird, N. (1978). Nonparametric maximum likelihood estimation of a mixing distribution. {\it JASA}  78, No 364, 805-811.
\item
Little, R.J.A and Rubin, D.B. (2002). Statistical Analysis with Missing Data.
New York: Wiley
\item
Robbins, H. (1951). Asymptotically subminimax solutions of compound decision problems. In Proceedings of the Second Berkeley Symposium on Mathematical Statistics and
Probability, 1950 131–148. Univ. California, Berkeley. MR0044803
\item
Robbins, H. (1956). An empirical Bayes approach to statistics. In Proc. Third Berkeley
Symp. 157–164. Univ. California Press, Berkeley. MR0084919
\item
Robbins, H. (1964). The empirical Bayes approach to statistical decision problems. Ann.
Math. Statist. 35 1–20. MR0163407

\item
Zhang, C-H. (2003)  Compound decision theory  and empirical Bayes  methods. Ann. Stat.   31 (2),  379-390.

\end{list}

\end{document}